\documentclass[
]{amsart}

\usepackage{tikz}
\usetikzlibrary{cd}

\usepackage{amsthm,amssymb,verbatim,amsmath,amsfonts, amscd, color} 
\usepackage{tikz-cd}
\usepackage{tikz}

\usepackage[left=25mm,right=25mm,top=30mm,bottom=30mm]{geometry}
\usepackage{enumitem}
\setlist{leftmargin=10mm, topsep=1mm, itemsep=1pt}

\usepackage[sc,outermarks]{titlesec}
\titleformat{\section}{\normalsize\sc\center}{\thesection}{0mm}{ }
\titlespacing{\section}{0mm}{3em}{1em}

\usepackage{hyperref} %

\theoremstyle{plain}\newtheorem{Theorem}{Theorem}[section]
\theoremstyle{plain}
\theoremstyle{plain}\newtheorem{Corollary}[Theorem]{Corollary}
\theoremstyle{plain}
\theoremstyle{plain}
\theoremstyle{definition}
\theoremstyle{definition}\newtheorem{Example}[Theorem]{Example}
\theoremstyle{definition}
\theoremstyle{definition}\newtheorem{Remark}[Theorem]{Remark}
\theoremstyle{definition}
\theoremstyle{definition}\newtheorem{Notation}[Theorem]{Notation}
\theoremstyle{definition}
\theoremstyle{definition}
\theoremstyle{definition}
\theoremstyle{definition}
\theoremstyle{definition}
\theoremstyle{definition}
\theoremstyle{definition}\newtheorem{Notation/Definition}[Theorem]{Notation/Definition}
\theoremstyle{definition}
\theoremstyle{plain}

\def\Syl{\mathrm{Syl}}

\def\dim{\mathrm{dim}}

  \def\Hom{\mathrm{Hom}}

 \def\Irr{\mathrm{Irr}}

\newcommand{\SL}{\operatorname{SL}}

\newcommand{\GL}{\operatorname{GL}}

\newtheoremstyle{defnew}{4ex}{}{}{}{\bf}{}{.5em}{}
\theoremstyle{defnew}

\theoremstyle{remark}

\theoremstyle{theorem}

\begin{document}

\title
{A reciprocity theorem of Robinson-Benson-Webb for finite-dimensional symmetric algebras}
\author{Shigeo Koshitani}
\address{S. Koshitani, Department of Mathematics and Informatics,
Graduate School of Science, Chiba
University, 1-33 Yayoi-cho, Inage-ku,Chiba 263-8522, Japan.}
\email{koshitan@math.s.chiba-u.ac.jp} 

\thanks{}

\subjclass[2010]{20C20, 16D50}

\keywords
{A reciprocity theorem of Robinson-Benson-Webb, symmetric algebra, 
projective module, simple module}

\begin{abstract} 
We generalize the reciprocity theorem of G.R.~Robinson, D. Benson and P. Webb 
between a finite group and its subgroup
to the case of finite-dimensional {\it symmetric} algebras over a field which are connected by
a bimodule for the two algebras.
\end{abstract}

\maketitle

\section{Introduction}
\noindent
In his paper \cite{Rob89}, G.R.~Robinson presents a reciprocity theorem for
simple/projective modules over the group algebras $kG$ and $kH$ where 
$k$ is a field and 
$H$ is a subgroup of a finite group $G$
(for the precise statement see Corollary~\ref{Robinson}).


Our main theorem stated below claims that Robinson's reciprocity theorem is extended for 
finite-dimensional $k$-algebras $A$ and $B$, 
and for an $(A,B)$-bimodule $M$ instead of $kG$ and $kH$,
provided $A$ and $B$ both are symmetric algebras and $M$ is projective as a left $A$-module
and also as a right $B$-module.
Actually the main theorem 
should work nicely when
we look for a kind of equivalence between the module categories of $A$-modules and $B$-modules
via the $(A,B)$-bimodule $M$. Now, our main result is:

\begin{Theorem}\label{MainTheorem}
Let $k$ be an algebraically closed field, and 
let $A$ and $B$ be finite-dimensional {\sf symmetric} $k$-algebras.
Further, let $M$ be an $(A,B)$-bimodule
such that $M$ is projective as a left $A$-module and also as a right $B$-module.
Then, for any simple right $A$-module $S$ and any simple right $B$-module $T$, 
$$
[P(T)\,|\,S\otimes_AM]^B=[P(S)\,|\,T\otimes_BM^*]^A
$$
where ${M^*}\!\!:=\Hom_k(M,k)$ is the $k$-dual of $M$, $P(S)$ and $P(T)$ are the
projective covers of $S$ and $T$, respectively, and $[P\,|\,U]^A$ for a projective
right $A$-module $P$ and a right $A$-module $U$ denotes the multiplicity of $P$ as a direct
summand of $U$.
\end{Theorem}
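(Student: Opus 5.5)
The plan is to express both multiplicities as dimensions of Hom spaces and then match them by adjunction and duality.

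\emph{Step 1: multiplicities as Hom dimensions.} For a symmetric algebra $B$ over an algebraically closed field, the projective cover $P(T)$ is also the injective hull of $T$, with socle $T$. Hence for any finitely generated right $B$-module $U$,
$$[P(T)\,|\,U]^B=\dim_k \mathrm{Im}\bigl(\Hom_B(P(T),U)\to \Hom_B(T,U)\bigr)$$
in a suitable sense. The cleanest version I would use is the following. Let $\mathcal{P}(P(T),U)$ be the morphisms factoring through projectives, and let $\mathrm{rad}$ denote the Jacobson radical of the category. Then $[P(T)|U]^B$ is the dimension of $\Hom_B(P(T),U)$ modulo those maps $P(T)\to U$ that are not split injective when composed with projection to a summand. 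Concretely, a homomorphism $f\colon P(T)\to U$ is split injective iff $f$ does not kill $\mathrm{soc}\,P(T)=T$. This uses that $P(T)$ is injective with simple socle, which holds since $B$ is symmetric (so self-injective with $\mathrm{soc}\,P(T)\cong T$). Thus
$$[P(T)|U]^B=\dim_k\bigl(\Hom_B(P(T),U)/\{f : f(\mathrm{soc}P(T))=0\}\bigr).$$
Equivalently, this is the rank of the bilinear pairing $\Hom_B(T,U)\times\Hom_B(U,T)\to \Hom_B(T,T)=k$ given by composition. Indeed, a summand $P(T)$ of $U$ gives an inclusion $T\hookrightarrow U$ followed by a projection $U\to T$ with nonzero composite, and conversely. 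I would prove the pairing formula directly: the rank of $(\alpha,\beta)\mapsto\beta\alpha$ equals the number of summands of $U$ on which some map $T\to U\to T$ is nonzero. For a non-projective indecomposable summand $X$, any composite $T\to X\to T$ that is nonzero makes $T$ a summand of $X$. Hence $X\cong T$, which is impossible unless $T$ is projective, and that case is also $P(T)$. For $X=P(T)$ the pairing has rank $1$.

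\emph{Step 2: apply to $U=S\otimes_AM$ and transfer via adjunction.} The functor $-\otimes_AM$ has right adjoint $\Hom_B(M,-)\cong -\otimes_B M^*$. This isomorphism holds because $M_B$ is projective and $B$ is symmetric, so $\Hom_B(M,B)\cong \Hom_k(M,k)$ as $(B,A)$-bimodules. Likewise ${}_AM$ projective and $A$ symmetric give that $-\otimes_BM^*$ is also left adjoint to $-\otimes_AM$ (two-sided adjoints). Then
$$\Hom_B(S\otimes_AM,T)\cong\Hom_A(S,T\otimes_BM^*),\qquad \Hom_B(T,S\otimes_AM)\cong\Hom_A(T\otimes_BM^*,S).$$
Under these isomorphisms, the composition pairing into $\Hom_B(T,T)$ versus $\Hom_A(S,S)$ must be compared. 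This is the main obstacle. Both $k$-valued pairings are obtained from the units and counits of the adjunctions, and the symmetrizing forms must be chosen compatibly so that the two pairings agree up to the identification $\Hom(T,T)=k=\Hom(S,S)$.

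\emph{Step 3: compatibility of the pairings.} I would write everything via the trace forms. For $\alpha\colon T\to S\otimes M$ and $\beta\colon S\otimes M\to T$, the scalar $\beta\alpha$ should equal a trace-type expression $\mathrm{tr}$ on $\Hom$ spaces. Symmetric algebras yield the nondegenerate, functorial "Calabi–Yau" pairing $\Hom(X,P)\times\Hom(P,X)\to k$ for projective $P$. If needed, I would bypass explicit compatibility by a dimension count. Define $\phi(S,T)=[P(T)|S\otimes_AM]$ and $\psi(S,T)=[P(S)|T\otimes_BM^*]$. Then $\dim\Hom_B(S\otimes M,P(T)^{\vee})$ and similar Cartan-type identities give linear relations. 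Specifically, $\dim\Hom_B(P(T'),S\otimes M)$ and $\dim\Hom_A(P(S'),T\otimes M^*)$ are both equal to the multiplicity of $S$ in $P(T')\otimes M^*$, which is the same as that of $T$ in $P(S')\otimes M$. The projective part is detected by $\mathrm{soc}$-to-$\mathrm{top}$ maps, i.e. by $\dim$ of the image of $\Hom(P(T),U)\to\Hom(P(T),U)$ induced by the nonzero socle map $P(T)\to T\to P(T)$. Transporting this endomorphism $z_T$ (a socle element of $P(T)$) through the adjunction, I expect it to correspond to the analogous socle map for $P(S)$. This is exactly where symmetry of both algebras enters, and it is the step I would check most carefully.
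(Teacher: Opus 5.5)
Your argument has a genuine gap at its core. The first formula in Step 1 is correct: $[P(T)\,|\,U]^B$ is the dimension of the image of restriction $\Hom_B(P(T),U)\to\Hom_B(T,U)$ along $T=\mathrm{soc}\,P(T)\hookrightarrow P(T)$, i.e.\ of the space of maps $T\to U$ factoring through a projective.

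The claimed ``equivalent'' description as the rank of the composition pairing $\Hom_B(T,U)\times\Hom_B(U,T)\to\Hom_B(T,T)=k$ is false. That pairing is an orthogonal sum over the indecomposable summands $X$ of $U$. A nonzero composite $T\to X\to T$ forces $X\cong T$. So its rank is the multiplicity of $T$ itself as a direct summand of $U$, not of $P(T)$.

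Both of your supporting claims fail:
\begin{itemize}
\item A non-projective simple $T$ is itself a non-projective indecomposable module that can occur as a summand of $U$.
\item For $P(T)\neq T$, the socle inclusion $T\hookrightarrow P(T)$ lands in $\mathrm{rad}\,P(T)$. Its composite with $P(T)\to P(T)/\mathrm{rad}\,P(T)\cong T$ is therefore zero, so $P(T)$ contributes rank $0$, not $1$.
\end{itemize}
For example, take $B=kC_p$ in characteristic $p$, $T=k$ and $U=kC_p$: the multiplicity is $1$ but the pairing rank is $0$.

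So Steps 2--3 try to transport the wrong invariant. Your fallback is only an expectation: the claim that $z_T$ ``corresponds to the analogous socle map for $P(S)$'' is unproved and not even well-posed. Adjunction turns $\Hom_B(P(T),S\otimes_AM)$ into $\Hom_A(P(T)\otimes_BM^*,S)$, where no $P(S)$ appears. The Cartan-type identities control only composition multiplicities, which do not determine numbers of projective summands.

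The missing idea is that no compatibility of pairings or symmetrizing forms is needed. The adjunction isomorphisms carry morphisms factoring through projectives onto such morphisms, because both $-\otimes_AM$ and $-\otimes_BM^*$ send projectives to projectives. For the second functor, ${}_AM$ projective and $A$ self-injective make $M^*$ projective as a right $A$-module.

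You already had the other ingredients. Your correct first formula counts $[P(T)\,|\,S\otimes_AM]^B$. Its dual, proved by the projective-cover argument, says $[P(S)\,|\,V]^A$ is the dimension of the space of maps $V\to S$ factoring through a projective. Apply this with $V=T\otimes_BM^*$, and restrict your own second adjunction $\Hom_B(T,S\otimes_AM)\cong\Hom_A(T\otimes_BM^*,S)$ to these subspaces. This gives the theorem at once.

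The paper does the mirror image. It writes $[P(S)\,|\,V]^A=\dim_k\Hom_A(S,V)-\dim_k\underline{\Hom}_A(S,V)$ and $[P(T)\,|\,U]^B=\dim_k\Hom_B(U,T)-\dim_k\underline{\Hom}_B(U,T)$, phrased through $\Omega^0$. It then compares the two using the adjunction $\Hom_B(S\otimes_AM,T)\cong\Hom_A(S,T\otimes_BM^*)$ and its stable version.
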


\noindent
Then, we immediately get original Robinson's reciprocity formula by applying 
Theorem~\ref{MainTheorem}
to the case that
$A:=kG$, $B:=kH$ and $M:=(kG){\downarrow}^{G\times G}_{G\times H}$
where the last term means the restriction of $kG$ as a $k(G\times G)$-module
to $k(G\times H)$-module.

\begin{Corollary}\label{Rob89}
[Theorem 3 in \cite{Rob89} and Lemma 5.2 in \cite{Gro02}]\label{Robinson}
Let $H$ be a subgroup of a finite group $G$, and let $S$ and $T$, respectively, be 
a simple $kG$-module and a simple $kH$-module. Then,
$
[P(T)\,|\,S{\downarrow}_H]^{kH}=[P(S)\,|\,T{\uparrow}^G]^{kG}$
where ${\downarrow}_H$ and ${\uparrow}^G$, respectively, mean the restriction to $H$
and the induction to $G$.
\end{Corollary}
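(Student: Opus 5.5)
The plan is to specialize Theorem~\ref{MainTheorem} to $A:=kG$, $B:=kH$ and $M:=kG$, viewed as a $(kG,kH)$-bimodule by left and right multiplication. This is the bimodule $(kG){\downarrow}^{G\times G}_{G\times H}$. Throughout, $k$ is algebraically closed, as in Theorem~\ref{MainTheorem}, and modules are right modules. There are three things to check: that the hypotheses of the theorem hold, that $S\otimes_AM\cong S{\downarrow}_H$, and that $T\otimes_BM^*\cong T{\uparrow}^G$. Once these are in place, the displayed equality of the theorem is literally the equality of Corollary~\ref{Robinson}.

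\emph{Hypotheses.} Both $kG$ and $kH$ are symmetric algebras, via the symmetrizing form $\lambda(\sum_g a_g g)=a_1$. This form is nondegenerate because $\lambda(g h)=\delta_{h,g^{-1}}$, and it is symmetric because $\lambda(xy)=\lambda(yx)$ on group elements. The bimodule $M=kG$ is free of rank one as a left $kG$-module. As a right $kH$-module it is free with basis a set of left coset representatives of $H$ in $G$, since $kG=\bigoplus_{x\in G/H}x\,kH$. So $M$ is projective on both sides.

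\emph{The left-hand side.} For a simple $kG$-module $S$, the multiplication map gives $S\otimes_{kG}kG\cong S$, and the right $kH$-action is through $H\subseteq G$. Hence $S\otimes_AM\cong S{\downarrow}_H$.

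\emph{The right-hand side.} Here the one step needing care is identifying the dual $M^*$ as a $(kH,kG)$-bimodule. For a $(kG,kG)$-bimodule, the dual $\Hom_k(kG,k)$ carries the action $(b f a)(m)=f(a m b)$. I would define $\varphi\colon kG\to (kG)^*$ by $\varphi(a)(m):=\lambda(ma)$. This map is a $k$-linear isomorphism because $\lambda$ is nondegenerate. It is also a bimodule homomorphism: on one hand $(b\,\varphi(a)\,a')(m)=\lambda(a' m b a)$, and on the other $\varphi(b a a')(m)=\lambda(m b a a')$. These two agree by the symmetry of $\lambda$, which moves $a'$ from the front to the back. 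Restricting the left action to $kH$ gives $M^*\cong kG$ as a $(kH,kG)$-bimodule. Consequently $T\otimes_BM^*\cong T\otimes_{kH}kG=T{\uparrow}^G$.

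I expect this bimodule isomorphism $M^*\cong kG$ to be the only point requiring attention. The main risk is a bookkeeping error in the order of the factors and in left/right conventions. If a left-module convention is preferred for Corollary~\ref{Robinson}, one can pass between left and right modules via the anti-automorphism $g\mapsto g^{-1}$; this preserves restriction, induction, simplicity and projective covers. Substituting these identifications into Theorem~\ref{MainTheorem} yields $[P(T)\,|\,S{\downarrow}_H]^{kH}=[P(S)\,|\,T{\uparrow}^G]^{kG}$.
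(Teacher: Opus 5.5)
Your proposal is correct and follows the paper's route. The paper obtains Corollary~\ref{Robinson} in one line by applying Theorem~\ref{MainTheorem} with $A:=kG$, $B:=kH$ and $M:=(kG){\downarrow}^{G\times G}_{G\times H}$. You supply the verifications the paper leaves implicit, namely the symmetry of $kG$ and $kH$, the two-sided projectivity of $M$, $S\otimes_AM\cong S{\downarrow}_H$, and the bimodule isomorphism $M^*\cong kG$ giving $T\otimes_BM^*\cong T{\uparrow}^G$, and all of these checks are correct.
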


\begin{Remark}
Footnote 1 on p.108 of \cite{Rob89} says that
D.~Benson and P.~Webb have known the same result as Corollary~\ref{Rob89}.
\end{Remark}

\noindent
Throughout this paper we use the following notation and convention.

\begin{Notation}
Here $k$ is an algebraically closed field, and $A$ and $B$ are finite-dimensional $k$-algebras.
All modules are finitely generated right modules unless stated otherwise.
We use the notation $M^*$, $P(U)$ and $[P\,|\,U]^A$ 
just as explained in Theorem~\ref{MainTheorem}.
Recall that $M^*$ becomes a $(B,A)$-bimodule by the action
$(bfa)(m):=f(amb)$ for $f\in M^*$, $a\in A$, $b\in B$ and $m\in M$. Further the $k$-algebra
$A$ is called {\sf symmetric} if $A\cong A^*$ as $(A,A)$-bimodules (see \cite[2.11]{Lin18}).
One of the important examples of symmetric $k$-algebras is the group algebra $kG$ of a
finite group $G$ (see \cite[Theorem 2.11.2]{Lin18}).
For an $A$-module $U$, we denote by $\Omega^0(U)$ the projective-free part of $U$,
namely, $U=\Omega^0(U)\oplus P$ such that
$P$ is a projective $A$-module and $\Omega^0(U)$ is an $A$-module satisfying that 
any indecomposable direct summand of $\Omega^0(U)$ is non-projective.
We write $\Irr_k(A)$ for the set of all non-isomorphic simple $A$-modules.
For $A$-modules $U$ and $V$, we denote by ${\underline{\Hom}}_A(U,V)$ the $k$-vector space
$\Hom_A(U,V)/\Hom_A^{\mathrm{pr}}(U,V)$ where $\Hom_A^{\mathrm{pr}}(U,V)$ is the set of all
elements $f\in\Hom_A(U,V)$ such that $f$ factor through projective $A$-modules
(see \cite[4.13]{Lin18}).
For a $kG$-module $X$ and a $kH$-module $Y$ we denote by
$X{\downarrow}_H$ and $Y{\uparrow}^G$ as in Corollarly~\ref{Robinson}.
Sometimes we write also like $X{\uparrow}^G_H$ and $Y{\downarrow}_H^G$ 
to mean the same but in order to emphasize
the original groups.
For the other notion and terminology, consult the book \cite{Lin18}. 
\end{Notation}

\section{The main theorem}
\begin{proof}[Proof of Theorem~\ref{MainTheorem}]
We can write
\begin{equation*}
T\otimes_BM^*=\Omega^0(T\otimes_BM^*)\oplus(\oplus_{S'\in\Irr_k(A)}\,m_{S'}\times P(S'))
\text{ for integers }m_{S'}\geq 0
\end{equation*}
and
\begin{equation*}
S\otimes_AM=\Omega^0(S\otimes_AM)\oplus(\oplus_{T'\in\Irr_k(B)}\,n_{T'}\times P(T'))
\text{ for integers }n_{T'}\geq 0.
\end{equation*}
These imply that
\begin{equation}\label{m_S}
\dim_k\,\Hom_A(S,T\otimes_BM^*)=\dim_k\,\Hom_A(S,\Omega^0(T\otimes_BM^*))+m_S
\end{equation}
and
\begin{equation}\label{n_T}
\dim_k\,\Hom_B(S\otimes_AM,T)=\dim_k\,\Hom_B(\Omega^0(S\otimes_AM),T)+n_T\,.
\end{equation}
Now, it follows that as $k$-spaces
\begin{align*}
\Hom_A(S,\Omega^0(T\otimes_BM^*))&
\cong\underline{\Hom}_A(S,\Omega^0(T\otimes_BM^*))\text{ by \cite[Corollary 4.13.4]{Lin18} }
\\&
=\underline{\Hom}_A(S,T\otimes_BM^*))\text{ by the definitions of }\Omega^0\text{ and }
\underline{\Hom}
\\&
\cong\underline{\Hom}_B(S\otimes_AM,T) \text{ by \cite[Proposition 2.15.5]{Lin18}  }
\\&
\cong\Hom_B(\Omega^0(S\otimes_AM),T) \text{\ \  just as above,}
\end{align*}
namely,
\begin{equation}\label{adjoint3}\quad
\dim_k\,\Hom_A(S,\Omega^0(T\otimes_BM^*))=\dim_k\,\Hom_B(\Omega^0(S\otimes_AM),T).
\end{equation}
On the other hand, \cite[Theorem 2.12.7]{Lin18} implies that
\begin{equation}\label{adjoint4}
\dim_k\,\Hom_A(S,T\otimes_BM^*)=\dim_k\,\Hom_B(S\otimes_AM,T).
\end{equation}
Hence
\begin{align*}
m_S&=\dim_k\,\Hom_A(S,T\otimes_BM^*)-\dim_k\,\Hom_A(S,\Omega^0(T\otimes_BM^*))
\text{ \ \ by (\ref{m_S})}
\\&
=\dim_k\,\Hom_B(S\otimes_AM,T)-\dim_k\,\Hom_B(\Omega^0(S\otimes_AM),T)
\text{ \ \ by (\ref{adjoint4}) and (\ref{adjoint3})}
\\&=
n_T\text{ \ \ by (\ref{n_T})}.
\end{align*}
This completes the proof.
\end{proof}

\begin{Remark}
We hope Theorem~\ref{MainTheorem} might be applied even for wider algebras
such as Iwahori-Hecke algebras (see Chap.7 of \cite{GP00} and  \S2 of \cite{Miy25}).
\end{Remark}

\section{Example}
\noindent
Finally we give an example where Corollary~\ref{Robinson}
does not work while our main result Theorem~\ref{MainTheorem} does work.
\begin{Example}
Let $k$ be an algebraically closed field of $3$, and set $G:={\sf M}_{12}$, the Mathieu group of 
degree $12$ and $H:=\SL_3(3)$. It follows from 
\cite[\S 9, (8) in the table on p.2050 and Proposition 59]{NU09} the following.
That is, $G$ and $H$ have Sylow $3$-subgroups which are isomorphic to $3_+^{1+2}$, the
extra-special group of order $27$ of exponent $3$, so we identify them, namely
let $P\in\Syl_3(G)\cap\Syl_3(H)$ with $P\cong 3_+^{1+2}$,
Then $P$ has a normal subgroup $Q$ of index $3$ such that 
$N_G(Q)\cong N_H(Q)\cong Q\rtimes\GL_2(3)$ (a semi-direct product). 
Actually we can identify them, so 
set $N:=N_G(Q)=N_H(Q)$ and then $N\gneqq N_G(P)=N_H(P)\cong P\rtimes(C_2\times C_2)$
where $C_2$ is the cyclic group of order $2$. 
We can set $\Irr_k(kN):=\{k_N, 1=1_{kN}, 2, 2^*, 3_1, 3_2\}$ where the numbers mean the $k$-dimensions
(see \cite[(7.4)]{KW99} and \cite[p.1223]{Kos87}). Note that the group $H$ in \cite{Kos87} is the
same as $N$ in \cite{KW99}, which is actually the $N$ in here as well.
Now, set 
$$M:=kG\otimes_{kN}kH, \ A:=kG\text{ and }B:=kH.$$
Thus, $M$ is an $(A,B)$-bimodule.
It is easy to know by the Mackey decomposition theorem that 
$M$ is projective (free, in fact) as a left $A$-module, and so is
as a right $B$-module.
Now, we use the same notation in \cite[\S 0]{KW99}.
Then Theorem (ii) in [ibid] says that there is a simple $A$-module $S:=45'$ of $k$-dimension $45$
such that 
\begin{equation}\label{45'}
S{\downarrow}^G_N=W\oplus P(3_2)
\end{equation}
for an indecomposable $kN$-module $W:=f(S):=f_{(G,Q,N)}(S)$ with vertex $Q$
and $3_2$ is a simple $kN$-module of $k$-dimension $3$ (see (7.4) in [ibid]
and see also \cite[Chap.4 \S 4.1]{NT88} for the notation $f_{(G,Q.N)}$ 
which is the Green correspondence).
Then, by the information in \cite{Kos87}, we know that
\begin{equation}\label{3_2}
\mathcal P:=P(3_2){\uparrow}_N^H=P(15)\oplus P(15^*)\oplus P(7)\oplus P(27)
\end{equation}
where $7, 27\in\Irr_k(B)$ (see \cite{Kos87}). So that
\begin{equation}\label{S-tensor-M}
S\otimes_AM=S{\downarrow}^G_N{\uparrow}_N^H=W{\uparrow}_N^H\oplus\mathcal P.
\end{equation}
Let $f':=f_{(H,Q,N)}$ be the Green correspondence with respect to $(H,Q,N)$. 
Since we know that $Q\cap Q^h=1$ for any $h\in H-N$, 
it holds that 
\begin{equation}\label{W}
W{\uparrow}_N^H=X\oplus\text{(proj)}
\end{equation}
where $X:={f'}^{-1}\circ f(S)={f'}^{-1}(W)$.
Further we know that the projective part of (\ref{W}) is $P(T)\oplus P(T^*)$,
where $T:=15\in\Irr_k(B)$ (see \cite{Kos87}).  Thus, by (\ref{3_2}), (\ref{S-tensor-M}) and (\ref{W}),
it holds that
\begin{equation}\label{S-tensor-M_2}
S\otimes_AM=X\oplus\Big(2\times P(T)\oplus 2\times P(T^*)\oplus P(7)\oplus P(27)\Big)
\end{equation}
Thus, 
by making use of Theorem~\ref{MainTheorem} that
\begin{equation}\label{claim}
2=[P(T)\,|\,S\otimes_AM]^B=[P(S)\,|\,T\otimes_BM^*]^A.
\end{equation}
On the other hand, it follows from \cite[(1.3)Proposition (v)]{Kos87} that
\begin{equation}\label{SellingPoint}
T\otimes_BM^*=T{\downarrow}^H_N{\uparrow}_N^G \supsetneqq 3_2{\uparrow}_N^G
=P(S)\oplus U
\end{equation}
for an $A$-module $U$ with $P(S)\,{\not|}\,U$.
Perhaps the readers would agree that it should be hard to know that
$T{\downarrow}^H_N{\uparrow}_N^G$ has $2\times P(S)$ as direct summands. 
And these kind of arguments could be necessary sometimes when one wants to know
the relation-ships between representations of $A$ and $B$.
Hopefully our main result Theorem~\ref{MainTheorem} 
could be useful for the other purpose.
\end{Example}

\noindent {\bf Acknowledgement.} 
{\small
The author is grateful to Markus Linckelmann for his advice.}


\begin{thebibliography}{9999999} 


\bibitem[GP00]{GP00}
M. Geck, G. Pfeiffer.
\textit{Characters of Finite Coxeter Groups and Iwahori-Hecke Algebras.} 
Clarendon Press, 2000.

\bibitem[Gro02]{Gro02} J. Grodal.
Higher limits via subgroup complexes.
Ann. of Math. {\bf 155} (2002), 405--457.

\bibitem[Kos87]{Kos87}
S. Koshitani.
The Loewy structure of the projective indecomposable modules for $\SL(3,3)$ and
its automorphism group in characteristic $3$.
Comm.~Algebra {\bf 15} (1987), 1215--1253.

\bibitem[KW99]{KW99}
S. Koshitani, K. Waki.
The Green correspondents of the Mathieu group $M_{12}$
in characteristic $3$.
Comm.~Algebra {\bf 27} (1999), 37--66.


\bibitem[Lin18]{Lin18} M.~Linckelmann. 
The Block Theory of Finite Group Algebras, vol.1. 
Cambridge Univ.~Press, Cambridge, 2018.

\bibitem[Miy25]{Miy25}
H. Miyachi.
\textit{Two reciprocities on Hecke algebras Part 1: Robinson's reciprocity.}
In: Cohomology Theory of Finite Groups and Related Topics,
RIMS K\^oky\^uroku  
\textbf{2306}, Kyoto University, 4 pages, 2025.


\bibitem[NT88]{NT88}
H.~Nagao, Y. Tsushima.
Representations of Finite Groups, Academic Press, New York, 1988.

\bibitem[NU09]{NU09}
R. Narasaki, K. Uno.
Isometries and extra special Sylow groups of order $p^3$.
J.~Algebra {\bf 322} (2009), 2027--2068.



\bibitem[Rob89]{Rob89}
G.R.~Robinson.
On projective summands of induced modules.
J.~Algebra {\bf 122} (1989), 106--111.
\end{thebibliography}
\end{document}